\newcommand{\Be}{\begin{equation}}
\newcommand{\Ee}{\end{equation}}
\newcommand{\Ba}[1]{\begin{array}{#1}}
\newcommand{\Ea}{\end{array}}
\newcommand{\Bea}{\begin{eqnarray}}
\newcommand{\Eea}{\end{eqnarray}}
\newcommand{\Beas}{\begin{eqnarray*}}
\newcommand{\Eeas}{\end{eqnarray*}}
\newcommand{\Benu}{\begin{enumerate}}
\newcommand{\Eenu}{\end{enumerate}}
\newcommand{\Bi}{\begin{itemize}}
\newcommand{\Ei}{\end{itemize}}
\def\intslash{\rlap{\kern  .32em $\mspace {.5mu}\backslash$ }\int}
\def\qsl{{\rlap{\kern  .32em $\mspace {.5mu}\backslash$ }\int_{Q_x}}}
\newcommand {\Span} {\operatorname{span}}
\def\emph#1{{\it #1 }}
\def\inn#1#2{\langle#1,#2\rangle}
\def\card{\text{\rm card}}
\def\lc{\lesssim}
\def\eps{\varepsilon}
\def\bbD{{\mathbb {D}}}
\def\bbE{{\mathbb {E}}}
\def\bbN{{\mathbb {N}}}
\def\bbR{{\mathbb {R}}}
\def\bbZ{{\mathbb {Z}}}
\def\sH{{\mathscr {H}}}
\def\cI{{\mathcal {I}}}
\def\cT{{\mathcal {T}}}
\def\cV{{\mathcal {V}}}
\def\T{{\hbox{\bf T}}}
\def\be#1{\begin{equation}\label{ #1}}
\def\endeq{\end{equation}}
\def\endal{\end{align}}
\def\bas{\begin{align*}}
\def\eas{\end{align*}}
\def\bi{\begin{itemize}}
\def\ei{\end{itemize}}
\def\eps{\varepsilon}
\def\emph#1{{\it #1}}
\def\textbf#1{{\bf #1}}
\def\bbone{{\mathbbm 1}}
\theoremstyle{plain}
  \newtheorem{theorem}{Theorem}[section]
   \newtheorem{proposition}[theorem]{Proposition}
\theoremstyle{remark}
   \newtheorem{remark}[theorem]{Remark}
\theoremstyle{definition}
\numberwithin{equation}{section}
\newcommand {\SC} {{\mathbb C}}
\newcommand {\SD} {{\mathbb D}}
\newcommand {\SE} {{\mathbb E}}
\newcommand {\SR} {{\mathbb R}}
\newcommand {\SZ} {{\mathbb Z}}
\newcommand {\e} {{\varepsilon}}
\newcommand {\mand} {{\quad\mbox{and}\quad}}
\renewcommand {\mid} {{\,\,\,\colon\,\,\,}}
\def\lan#1#2{{\langle{#1},{#2}\rangle}}
\newcounter{reg}
\def\XXint#1#2#3{{\setbox0=\hbox{$#1{#2#3}{\int}$ }
\vcenter{\hbox{$#2#3$ }}\kern-.6\wd0}}
\definecolor{ascol}{rgb}{0,0,1.} 
\definecolor{ggcol}{cmyk}{.74, 0, 1, .41} 
\definecolor{tucol}{rgb}{0.9,.5,0} 
\begin{document}

\title
[Haar multiplers]
{A sufficient condition for Haar multipliers in Triebel-Lizorkin spaces}

\author[G. Garrig\'os \ \ \ A. Seeger \ \ \ T. Ullrich] {Gustavo Garrig\'os   \ \ \ \   Andreas Seeger \ \ \ \ Tino Ullrich}

\address{Gustavo Garrig\'os\\ Department of Mathematics\\University of Murcia\\30100 Espinardo\\Murcia, Spain} \email{gustavo.garrigos@um.es}

\address{Andreas Seeger \\ Department of Mathematics \\ University of Wisconsin \\480 Lincoln Drive\\ Madison, WI,53706, USA} \email{seeger@math.wisc.edu}
\address{Tino Ullrich\\ Fakult\"at f\"ur Mathematik\\ Technische Universit\"at  Chemnitz\\09107 Chemnitz, Germany}
\email{tino.ullrich@mathematik.tu-chemnitz.de}

\subjclass[2010]{46E35, 46B15, 42C40}

\keywords{Haar basis, Triebel-Lizorkin spaces, multipliers, variation norms}

\date{\today}

\dedicatory{In memory of Guido Weiss}

\maketitle

\begin{abstract}
We consider Haar multiplier operators $T_m$ acting on Sobolev spaces, and more generally  Triebel-Lizorkin spaces $F^s_{p,q}(\SR)$, 
for indices in which the Haar system is not unconditional.
When $m$ depends only on the Haar frequency, we give a sufficient condition for the boundedness of $T_m$ in $F^s_{p,q}$, 
in  terms of the  variation norms $\|m\|_{V_u}$, which is optimal in $u$ (up to endpoints) when $p, q> 1$. 
\end{abstract}

\section{Introduction}

\noindent 
Consider the classical Haar system in $\SR$, 
\Be\label{HaarS}
\sH=\big\{h_{j,\mu}\mid j\geq-1, \,\mu\in\SZ\big\},
\Ee
where, if $h=\bbone_{[0,1/2)}-\bbone_{[1/2,1)}$, we let
\[
h_{j,\mu}(x)=h(2^jx-\mu)\,,\quad\mbox{for}\quad
j=0,1,2,\ldots, \;\;\mu\in\SZ,
\]
while for $j=-1$ we let
\[
h_{-1,\mu}=
\bbone_{[\mu,\mu+1)},\quad \mu\in\SZ.
\]
We shall refer to the elements of the family
$\sH_j=\{h_{j,\mu}\,:\,\mu\in \bbZ \}$ as {\it Haar functions of frequency} $2^j$.

Let $F^s_{p,q}$ denote the usual Triebel-Lizorkin space in $\SR$; see \cite{Tr83}.
It is known from the work of Triebel \cite[Theorem 2.9.ii]{triebel-bases} that $\sH$ is an unconditional 
basis of $F^s_{p,q}(\bbR)$ when 
$s$ belongs to the range 
\Be\label{eq:uncond-range} \max\Big\{1/p-1,1/q-1\Big\} < s < \min\Big\{1/p,1/q, 1\Big\}.
\Ee 
That this range is actually optimal was shown by the last two authors in \cite{su,sudet}.
More recently, we proved in \cite{gsu} that $\sH$ is a Schauder basis  of $F^s_{p,q}(\SR)$ 
(with respect to natural enumerations) in the larger range
\Be
1/p-1 < s < \min\Big\{1/p, 1\Big\},\quad \mbox{(for all $0<q<\infty$)},
\label{cond_range}
\Ee
while at the endpoints (see \cite{gsu-TL}) the property holds if and only if 
\Be
s=1/p-1\mand 1/2<p\leq 1,
\label{cond_range_end}
\Ee
also for all $0<q<\infty$.
These regions are depicted in Figure  \ref{fig1} below.

\begin{figure}[h]
 \centering
\subfigure
{\begin{tikzpicture}[scale=2]

\node [right] at (0.75,-0.5) {{\footnotesize unconditional}};

\draw[->] (-0.1,0.0) -- (2.1,0.0) node[right] {$\frac{1}{p}$};
\draw[->] (0.0,-0.0) -- (0.0,1.1) node[above] {$s$};
\draw (0.0,-1.1) -- (0.0,-1.0)  ;

\draw (1.0,0.03) -- (1.0,-0.03) node [below] {$1$};
\draw (2.0,0.03) -- (2.0,-0.03) node [below] {$2$};
\draw (0.03,1.0) -- (-0.03,1.00);
\node [left] at (0,0.9) {$1$};
\draw (0.03,.5) -- (-0.03,.5) node [left] {$\tfrac{1}{q}$};
\draw (0.03,-.5) -- (-0.03,-.5) node [left] {$\tfrac{1}{q}${\small{$-1$}}};
\draw (0.03,-1.0) -- (-0.03,-1.00) node [left] {$-1$};

\draw[dotted] (1.0,0.0) -- (1.0,1.0);
\draw[dotted] (0,1.0) -- (1.0,1.0);
\draw[dotted] (2,0.0) -- (2,1.0);

\path[fill=green!70, opacity=0.4] (0.0,0.0) -- (.5,.5)-- (1.5,0.5) -- (1,0)--(.5,-.5) -- (0,-0.5)--(0,0);
\draw[dotted] (0,0.5)--(1.5,0.5);
\draw[dotted] (0,-0.5)--(0.5,-0.5);

\draw[dashed] (0.0,-1.0) -- (0.0,0.0) -- (1.0,1.0) -- (2,1.0) -- (1.0,0.0) --
(0.0,-1.0);

\end{tikzpicture}
}
\subfigure
{
\begin{tikzpicture}[scale=2]

\node [right] at (0.75,-0.5) {{\footnotesize Schauder}};

\draw[->] (-0.1,0.0) -- (2.1,0.0) node[right] {$\frac{1}{p}$};
\draw[->] (0.0,-0.0) -- (0.0,1.1) node[above] {$s$};
\draw (0.0,-1.1) -- (0.0,-1.0)  ;

\draw (1.0,0.03) -- (1.0,-0.03) node [below] {$1$};
\draw (2.0,0.03) -- (2.0,-0.03) node [below] {$2$};
\draw (0.03,1.0) -- (-0.03,1.00);
\node [left] at (0,0.9) {$1$};
\draw (0.03,-1.0) -- (-0.03,-1.00) node [left] {$-1$};

\draw[dotted] (1.0,0.0) -- (1.0,1.0);
\draw[dotted] (0,1.0) -- (1.0,1.0);
\draw[dotted] (2,0.0) -- (2,1.0);

\draw[dashed, thick] (1,0) -- (0.0,-1.0)--(0.0,0.0) -- (1,1)--(2,1);


\draw[white, fill=red!70, opacity=0.4] (0,0) -- (1,1)
-- (2,1.0) -- (1.0,0) --(0,-1)--(0,0);

\draw[very thick,red] (1.98,0.98) -- (1.0,0.0);
\fill[red] (1,0) circle (1pt);
\fill[white] (2,1) circle (1pt);
\draw (2,1) circle (1pt);


\end{tikzpicture}

}
\caption{Parameter domain  for  $\sH$ to be an unconditional basis (left figure) 
or a Schauder basis (right figure) in $F^s_{p,q}(\SR)$. 
}\label{fig1}
\end{figure}
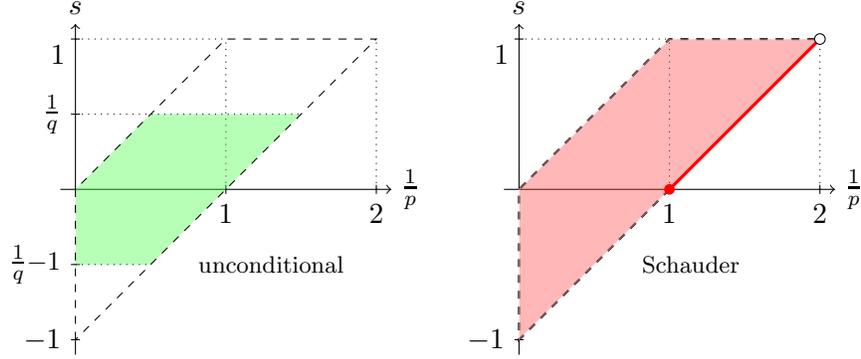

We shall mainly be interested in values of the parameters
outside the region of unconditionality.
In that range, it becomes a natural question to find sufficient conditions on a sequence $\{m_{j,\mu}\}$
so that the mapping
\[
f\longmapsto \sum_{j\geq0}\sum_{\mu\in\SZ}m_{j,\mu}\,2^j\lan f{h_{j,\mu}} h_{j,\mu},
\]
defined say for $f\in\Span\sH$, extends as a bounded linear operator in the space $F^s_{p,q}$.

\

In this paper we regard this problem in the special case when the sequence is constant in each frequency level, namely, if 
$m=\{m(j)\}_{j\geq0}$, we consider the operators
\[ 
T_m f = \sum_{j\ge 0}  m(j)\, \bbD_j f,
\]
where $\SD_j$ denotes the orthogonal projection onto the space generated by $\sH_j$, that is
\[
\SD_jf=\sum_{\mu\in\SZ}2^j\lan f{h_{j,\mu}}\,h_{j,\mu},\quad j\geq0.
\]
It is well known that one can write 
\[
\bbD_j=\bbE_{j+1}-\bbE_{j},
\]
where $\SE_j$ is the conditional expectation operator defined by
\Be
\label{expect}
\bbE_j f(x)=\sum_{\mu \in \bbZ} \bbone_{I_{j,\mu }}(x)\,2^{j} \,\int_{I_{j,\mu }}f(y) dy\,,
\Ee
associated with the dyadic intervals $I_{j,\mu}=[\mu2^{-j},(\mu+1)2^{-j})$, $\mu\in\SZ$.

The uniform boundedness of the operators $\SE_N$ in $F^s_{p,q}$ (and $B^s_{p,q}$) has been throughly studied in the papers
\cite{gsu, gsu-endpt, gsu-TL}. In particular, it is shown in those papers that $\sH$ is a Schauder basis of $F^s_{p,q}$ 
(with respect to natural enumerations) if and only if
\[
\sup_{N\geq0}\big\|\SE_N\big\|_{F^s_{p,q}\to F^s_{p,q}}<\infty\mand \mbox{$\Span\sH$ is dense in $F^s_{p,q}$,} 
\]
and this in turn is equivalent to $(s,p,q)$ belonging to the ranges in \eqref{cond_range} and \eqref{cond_range_end}.
In those cases, an elementary summation by parts argument and the $\sigma$-triangle inequality, with $\sigma=\min\{1,p,q\}$, imply that 
\Be
\|T_mf\|_{F^s_{p,q}}\lesssim \|m\|_{\ell^\infty}+\Big[\sum_{j=1}^\infty|m(j)-m(j-1)|^\sigma\Big]^{\frac1\sigma},
\label{V1est}
\Ee
for all $f\in\Span\sH$ with $\|f\|_{F^s_{p,q}}\leq1$.

We shall next formulate a stronger multiplier result
which involves the Wiener space notion of sequences of bounded $u$-variation.
We recall how these are defined. If $u\ge 1$, we let  $\cV_u(m)$
 be the $u$-variation of the sequence $\{m(j)\}_{j\geq0}$,
defined by
\[\cV_u(m) =\sup \Big(\sum_{n=1}^{N}  |m(j_n)- m(j_{n-1})|^u\Big)^{1/u}\]
with the supremum taken  over all finite strings of numbers 
$\{j_0,\dots,  j_N\}$ satisfying $j_{n-1}<j_{n}$ for $1\le n\le N$, and $j_n\in \bbN\cup\{0\}$.
Note that if $u=1$ we simply have
\[
\cV_1(m)=\sum_{j=1}^\infty|m(j)-m(j-1)|.
\]
We denote by $V_u$ the  space of all $m: \bbN\cup\{0\}\to \SC$ for which
\[
\|m\|_{V_u} := \|m\|_\infty+ \cV_u(m)<\infty.
\]
In particular, if $1\leq u_1\leq u_2<\infty$, it holds 
\[
V_1\hookrightarrow  V_{u_1}\hookrightarrow V_{u_2}\hookrightarrow\ell^\infty, \quad 
\]
As an example, observe that $m(n)=1/(n+1)^{\alpha}$ belongs to $V_1$ for all $\alpha>0$, 
while the alternate sequence $M(n)=(-1)^n m(n)$ belongs to $V_u$ iff $\alpha>1/u$.

We wish to find, in the region of exponents $(s,p,q)$ where $\sH$ is a conditional basis of $F^s_{p,q}$, the largest possible $u$ 
for which $m\in V_u$ implies the boundedness of the operator $T_m$ in $F^s_{p,q}$.
The examples given in \cite{su}, based on multipliers taking the values $0$ and $1$ 
(suitable characteristic functions of finite sets of integers),
 show that for $ 1/u<s-1/q$ there are $m\in V_u$ such that 
the corresponding operators $T_m$ are unbounded on $F^s_{p,q}$; 
see also \S\ref{SS_NC} below.
Our main result in this note  shows that, in the case $1<p,q<\infty$, 
boundedness holds in the complementary range, 
except perhaps at the endpoint.

\begin{theorem}\label{thm:multipliers}
Let
$1<p<q<\infty$ and $1/q\le s<1/p$. Then 
\[\|T_m \|_{F^s_{p,q}\to F^s_{p,q} }
+\|T_m\|_{F^{-s}_{p'q'} \to F^{-s}_{p'q'} } \le C \|m\|_{V_u} , \quad\mbox{if}\quad \frac1u> s-\frac1q.\]
\end{theorem}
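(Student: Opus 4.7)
The plan is to prove $\|T_m\|_{F^s_{p,q}\to F^s_{p,q}}\lesssim \|m\|_{V_u}$; the companion bound on $F^{-s}_{p',q'}$ then follows by duality, using $T_m^* = T_{\bar m}$, $\|\bar m\|_{V_u}=\|m\|_{V_u}$, and $(F^s_{p,q})^*=F^{-s}_{p',q'}$ valid for $1<p,q<\infty$.

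First I reduce to multipliers $m$ of finite support, exploiting the Schauder basis property in the range \eqref{cond_range} from \cite{gsu} and the resulting uniform boundedness of partial sums. Abel summation with $a_k:=m(k)-m(k-1)$ and $m(-1):=0$ then yields
\[
T_m f \;=\; \sum_{k\ge 0}\,a_k\,(f-\bbE_k f), \qquad \|(a_k)\|_{\ell^u}\le V_u(m),
\]
which neatly separates the $V_u$-structure of $m$ (through the $\ell^u$ sequence $(a_k)$) from $f$ (through the martingale tails $f-\bbE_k f$).

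Next I combine this representation with the smooth Littlewood--Paley characterization $\|g\|_{F^s_{p,q}}\simeq\|(\sum_j 2^{jsq}|\varphi_j*g|^q)^{1/q}\|_{L^p}$ and apply H\"older's inequality in the $k$-summation with conjugate exponents $(u,u')$, reducing the theorem to the vector-valued estimate
\[
\Big\|\Big(\sum_{j\ge 0}2^{jsq}\Big(\sum_{k\ge 0}\big|\varphi_j*(f-\bbE_k f)\big|^{u'}\Big)^{q/u'}\Big)^{1/q}\Big\|_{L^p}\;\lesssim\;\|f\|_{F^s_{p,q}},
\]
which I expect to hold precisely when $1/u>s-1/q$.

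The main obstacle will be this last vector-valued inequality. Its crucial technical input is the sharp off-diagonal decay of the mixed kernels $\varphi_j*h_{k,\mu}$: because the Haar function has only $O(1/\xi)$ Fourier decay, these kernels decay only polynomially in $|j-k|$, as opposed to the rapid decay one would have with smooth wavelets. Careful bookkeeping of this polynomial decay, combined with a vector-valued Fefferman--Stein maximal inequality (to interchange $\ell^q$ with $L^p$) and a discrete Hardy-type summation in both $j$ and $k$, should produce exactly the threshold $1/u>s-1/q$. The sharpness of this threshold --- established by the counterexamples referenced in \S\ref{SS_NC} --- confirms that the variational hypothesis on $m$ cannot be relaxed further along these lines.
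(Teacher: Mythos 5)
There is a fatal gap at the heart of your plan: the vector-valued inequality to which you reduce the theorem is false for every $u>1$. Test it on $f=h_{N,\mu}$. Since $h_{N,\mu}$ has mean zero on every dyadic interval of length $\geq 2^{-N}$ and is constant on those of length $\leq 2^{-N-1}$, one has $f-\bbE_k f=h_{N,\mu}$ for $0\le k\le N$ and $f-\bbE_k f=0$ for $k>N$, so
\[
\Big(\sum_{k\ge 0}\big|\varphi_j*(f-\bbE_k f)\big|^{u'}\Big)^{1/u'}=(N+1)^{1/u'}\,|\varphi_j*h_{N,\mu}|,
\]
and the left-hand side of your estimate equals $(N+1)^{1/u'}\|h_{N,\mu}\|_{F^s_{p,q}}$, which is not $O(\|h_{N,\mu}\|_{F^s_{p,q}})$ uniformly in $N$ unless $u'=\infty$, i.e.\ $u=1$. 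The underlying reason is that your H\"older step retains from $m$ only $\|(m(k)-m(k-1))_k\|_{\ell^u}$, which is genuinely weaker than $\cV_u(m)$ when $u>1$: taking $a_k=N^{-1/u}$ for $0\le k\le N$ gives $\|a\|_{\ell^u}\simeq 1$ while $m(N)=(N+1)^{1/u'}$, and since $T_m h_{N,\mu}=m(N)h_{N,\mu}$ no bound of $\|T_m\|$ by $\|a\|_{\ell^u}$ alone can hold. The $u$-variation over \emph{arbitrary} increasing strings encodes cancellation that your decomposition discards at the first step, and no sharpening of the kernel estimates for $\varphi_j*h_{k,\mu}$ can recover it. (Your preliminary reductions --- duality for the $F^{-s}_{p',q'}$ bound and the Abel-summation identity $T_mf=\sum_k a_k(f-\bbE_kf)$ --- are correct; the problem is what you do with them next.)

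The paper's route is designed precisely to exploit the full variation structure. By a discrete version of a lemma of Coifman--Rubio de Francia--Semmes (via Bruneau's theorem, writing the sequence as $\rho\circ w$ with $\rho\in C^{1/u}[0,1]$ and expanding $\rho$ in the Haar system of $[0,1]$), every $m\in V_u$ is an absolutely convergent combination $\sum_l c_l g_l$ of step multipliers $g_l=\sum_\nu a_\nu\bbone_{I_\nu}$ on disjoint intervals with $\|a\|_{\ell^{u+\eps}}\le 1$. For such multipliers the operator norm is then obtained by bilinear complex interpolation between the unconditional regime (where $\|a\|_{\ell^\infty}$ suffices) and the Schauder/uniform-$\bbE_N$ regime (where $\|a\|_{\ell^1}$ suffices), interpolating simultaneously in the coefficient space and in the parameters $(s,1/p,1/q)$; this is what produces the threshold $1/u>s-1/q$. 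Any salvage of your approach would have to pass to such a step-multiplier decomposition before applying H\"older or a square-function argument.
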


\begin{remark}
The appearance of the variation norms is inspired by a result of Coifman, Rubio de Francia and Semmes \cite{CoFrSe88} on Fourier multipliers  
(which is based on the square function result of Rubio de Francia \cite{RubiodeFrancia}, see also \cite{Sjolin86rdf, lacey-rdf}).  
However the variation spaces come up  in quite different ways in  \cite{CoFrSe88} where the variation norm is taken over dyadic intervals 
$[2^j, 2^{j+1})$, with a bound  uniformly  in $j$. This has no analogue in our situation as for each interval $[2^j, 2^{j+1})$ there is 
only one Haar frequency; instead our conditions involve the  variation norms in the parameter $j$.
\end{remark}

\section {Subspaces of $V_u$}

As in \cite{CoFrSe88}, in order to analyze functions in $V_u$ it is convenient to  consider certain subspaces $R_u$ of $V_u$ built on convex  combinations of characteristic functions of unions of disjoint dyadic intervals. This is sketched in \cite{CoFrSe88}, but for the convenience of the reader we give a detailed exposition in the setting of variation spaces for functions on the integers.

For $1\le u<\infty$, let $r_u$ be the class of functions $g:\bbN_0\to \SC$ which are of the form
\[
g=\sum_\nu a_\nu \chi_{I_\nu}, \quad \mbox{with}\quad (\sum_\nu |a_\nu|^u)^{1/u}\le 1,
\]
where the $I_\nu$ are mutually disjoint intervals.
Then $R_u$ is the space of all sequences of the form 
\Be
m=\sum_l c_l g_l, \quad \mbox{with}\quad g_l\in r_u \mand
\sum|c_l|<\infty.
\label{Ru}
\Ee
The norm $\|m\|_{R_u}$ is defined as the infimum of $\sum_l|c_l |$  over all representations as in \eqref{Ru}.
These definitions (for functions on the real line) can be found in  \cite{CoFrSe88}. 
The following result is a discrete analogue of \cite[Lemme 2]{CoFrSe88}, whose proof is sketched for completeness.

\begin{proposition}\label{P_crs} For $\eps>0$, and $1\le u<\infty$ we have 
 \Be\label{incl}R_u\subset  V_u \subset R_{u+\eps}\Ee with continuous embedding.
\end{proposition}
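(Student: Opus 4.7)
The plan is to handle the two inclusions of \eqref{incl} separately.

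For $R_u\subseteq V_u$, note first that $\cV_u$ satisfies a triangle inequality when $u\ge 1$ (by Minkowski on each fixed string $j_0<\cdots<j_N$), so it suffices to establish $\|g\|_{V_u}\lesssim 1$ for a single $g=\sum_\nu a_\nu \chi_{I_\nu}\in r_u$ and then lift via the triangle sum. The bound $\|g\|_\infty \le \max_\nu|a_\nu|\le 1$ is immediate. For the variation, the key observation is that the $I_\nu$ are pairwise disjoint intervals in $\bbN\cup\{0\}$: hence along any increasing string $j_0<\cdots<j_N$, each value $a_\nu$ is attained on a single contiguous block of indices. Using $|a-b|^u \le 2^{u-1}(|a|^u+|b|^u)$ together with the fact that each $a_\nu$ borders at most two transitions (entry and exit of its block), one obtains $\cV_u(g)^u \le C\sum_\nu |a_\nu|^u \le C$.

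The more substantive inclusion $V_u\subseteq R_{u+\eps}$ will rely on a multiscale stopping-time decomposition, in the spirit of \cite{CoFrSe88}. After scaling so that $\|m\|_{V_u}\le 1$, for each $k\ge 0$ I set $\tau^{(k)}_0=0$ and recursively
\[
\tau^{(k)}_{n+1} \;=\; \inf\bigl\{\, j > \tau^{(k)}_n \,:\, |m(j)-m(\tau^{(k)}_n)|>2^{-k}\,\bigr\},
\]
stopping when no such $j$ exists. By the very definition of $\cV_u$, there are at most $N_k\le 2^{ku}$ such stopping times. Let $m_k$ be the step function equal to $m(\tau^{(k)}_n)$ on $[\tau^{(k)}_n,\tau^{(k)}_{n+1})$; then $\|m-m_k\|_\infty\le 2^{-k}$, and telescoping yields
\[
m \;=\; m_0 + \sum_{k\ge 1}(m_k-m_{k-1}),
\]
with $L^\infty$ convergence. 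Each difference $m_k-m_{k-1}$ is constant on the common refinement of the partitions at levels $k-1$ and $k$, which is built from at most $M_k\le N_{k-1}+N_k+1 \lesssim 2^{ku}$ disjoint intervals $J_{k,n}$, and its coefficients satisfy $|b_{k,n}|\le 3\cdot 2^{-k}$. A direct H\"older estimate then gives
\[
\Bigl(\sum_n |b_{k,n}|^{u+\eps}\Bigr)^{\frac{1}{u+\eps}} \;\lesssim\; 2^{-k}\, M_k^{\frac{1}{u+\eps}} \;\lesssim\; 2^{-k\eps/(u+\eps)}.
\]
Taking $c_k$ equal to the right-hand side and $g_k = c_k^{-1}(m_k-m_{k-1})\in r_{u+\eps}$, the series $\sum_k c_k$ is a convergent geometric series with ratio $2^{-\eps/(u+\eps)}<1$. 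A trivial treatment of $m_0$ (which takes at most two distinct values) completes the representation $m=\sum_l c_l g_l$ with $\sum_l|c_l|\le C_\eps \|m\|_{V_u}$.

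The main obstacle is the careful bookkeeping in the second inclusion: in particular, checking that the common refinement of two consecutive-level partitions still has $O(2^{ku})$ pieces, and tracking the constant, which blows up like $(1-2^{-\eps/(u+\eps)})^{-1}$ as $\eps\to 0^+$ --- this degeneracy appears to be inherent in the approach.
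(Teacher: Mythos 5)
Your proof is correct, but the substantive inclusion is established by a genuinely different route from the paper's. For $R_u\subset V_u$ both arguments are the same elementary computation: disjointness of the $I_\nu$ means each $a_\nu$ can enter at most two nonzero consecutive differences along any increasing string, giving $\cV_u(g)\le 2\|a\|_{\ell^u}$ for $g\in r_u$, and then one sums over the representation. For $V_u\subset R_{u+\eps}$ the paper follows \cite{CoFrSe88} and \cite{bruneau}: writing $w(n)$ for the cumulative $u$-th power variation of $m$ on $[0,n]$, it factors $m=\rho\circ w$ with $\rho$ H\"older of exponent $1/u$ on $[0,1]$, expands $\rho$ in the Haar system of $[0,1]$, and observes that the level-$j$ Haar piece, pulled back by the increasing function $w$, is $c_{j,\eps}$ times an element of $r_{u+\eps}$ with $c_{j,\eps}\approx 2^{-j\eps/(u(u+\eps))}$. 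You instead discretize $m$ directly at each oscillation threshold $2^{-k}$ via a stopping-time partition and telescope; your counting $N_k\le 2^{ku}$ (since each jump exceeds $2^{-k}$ and the $u$-th powers of the jumps along an increasing string sum to at most $\cV_u(m)^u\le 1$), the error bound $\|m-m_k\|_\infty\le 2^{-k}$, the bound $M_k\lesssim 2^{ku}$ on the common refinement, and the resulting coefficient decay $2^{-k\eps/(u+\eps)}$ are all correct, and the refinement pieces are indeed disjoint intervals, so $g_k\in r_{u+\eps}$ as claimed. Your argument avoids both Bruneau's factorization and the Haar expansion, so it is more self-contained; it also delivers the geometric decay of the coefficients that the paper records in Remark \ref{R_Rus} and exploits later, so nothing downstream is lost. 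One cosmetic point: after normalizing $\cV_u(m)\le 1$, the level-$0$ stopping procedure produces no stopping times at all, so $m_0$ is a single constant (one interval), which only simplifies your treatment of that term. The blow-up of the constant as $\eps\to0^+$ occurs in the paper's proof as well and is inherent in the statement.
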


\begin{proof}   
Consider first $g\in r_u$, with 
$g=\sum_\nu a_\nu \chi_{I_\nu}$. 
It is straightforward to see that $\cV_u(g)\le 2\|a\|_{\ell^u}$, thus  $R_u\subset V_u$.

For the second inclusion assume that $f\in V_u$, with $$\cV_u(f)=1,$$ for some $u<\infty$. This implies that $\lim_{n\to \infty} f(n)$ exists and is finite.

Let $w(0)=0$, and for $n\ge 1$, let $w(n)$ be the $u$-th power of the $u$-variation of $f$ over $[0,n]$, that is
\[w(n)
=\sup_{0\le n_0<n_1<\dots< n_N\le n} \sum_{i=1}^N |f(n_i)-f(n_{i-1})|^u.
\] 
Clearly  $w$  is positive, increasing and 
$\cV_u(f)=\lim_{n\to\infty} [w(n)]^{1/u}$ so that $w$
takes values in $[0,1]$. There are two situations (i) $w(n)<1$ for all $n\in \bbN$, and (ii) $w(n)=1$ for $n\ge N_0$ (and some $N_0$). In what follows we assume (i) and omit  the minor modification for (ii)
(in the second case one works with finite sequences instead of infinite sequences).

As stated in \cite[Theorem 2]{bruneau} and used in \cite{CoFrSe88}  one can express  
\[
f=\rho\circ w,\quad \mbox{where 
$\rho\in C^{1/u} [0,1]$}.
\]
 To verify this we may choose  a strictly increasing  sequence of 
nonnegative integers $\{j_n\}_{n=0}^\infty$ so that $j_0=0$ and
 \[
w(j_n)< w(j_{n+1}),\quad w(j_n)=w(k) \quad \mbox{for $j_n\le k<j_{n+1}$}.
\] 
This implies $f(k)=f(j_n)$ for $j_n\le k<j_{n+1}$. 
We now define a piecewise linear function $\rho(t)$, $0\leq t<1$, as follows
\[
\rho(t)= f(j_n)+ \frac{f(j_{n+1})-f(j_n)}{w(j_{n+1})-w(j_n)} (t-w(j_n)),
 \quad w(j_n)\le t<w(j_{n+1}).
\]
So $\rho(0)=f(0)$, and if we let
$\rho(1)= \lim_{n\to \infty} f(n)$,
then $\rho$  is continuous in $[0,1]$. Observe also that $\rho\circ w=f$, since for 
$j_n\le k<j_{n+1}$ we have 
\[\rho(w(k))=\rho(w({j_n}))= f(j_n)=f(k).\]

We now show the  H\"older condition 
\Be 
 \label{Hoelderrho}
|\rho(t)-\rho(t')|\le 3|t-t'|^{1/u}\,, \quad 0\le t'<t\le 1.\Ee
\begin{subequations}
By the definitions of $\rho$ and $w$
we have for $n'<n$
\Be|\rho(w(j_{n}))-\rho(w(j_{n'}))|= |f(j_n)-f(j_{n'})|\le
(w(j_n)-w(j_{n'}))^{1/u}.
\Ee
For $w(j_n)\le t< w(j_{n+1})$
\begin{align}
&|\rho(t)-\rho(w(j_n))| 
 = \Big|\frac{f(j_{n+1})-f(j_n)}{w(j_{n+1})-w(j_n)} \Big| 
 |t-w(j_n)|
\\
&\le \big|w(j_{n+1})-w(j_n)\big |^{-1+\frac 1u} \,  |t-w(j_n)|
\le |t-w(j_n)|^{1/u},
\notag
\end{align}
since $u\geq1$. Similarly
\begin{align}
& |\rho(w(j_{n+1}))-\rho(t)|
= \Big|\frac{f(j_{n+1})-f(j_n)}{w(j_{n+1})-w(j_n)} \Big| 
 |w(j_{n+1})-t|
\\
&\le \big|w(j_{n+1})-w(j_n)\big |^{-1+\frac 1u} |w(j_{n+1})-t|
\le |w(j_{n+1})-t|^{1/u}.
\notag
\end{align}
\end{subequations}
Combining the three cases  we obtain \eqref{Hoelderrho}
for all $t,t'\in [0,1)$, and by continuity the result also holds true on $[0,1]$.

\medskip

We now use the expansion of $\rho$ in terms of the  Haar system in $[0,1]$, that is
$\{\bbone_{[0,1)}, h_{j,\mu}\}$ with $j\geq0$ and $0\leq \mu <2^{j}$. 
Here 
\[
h_{j,\mu} =
\bbone_{I_{j,\mu}^{\text{left}}}-
\bbone_{I_{j,\mu}^{\text{right}}}\]
with
$I_{j,\mu}^{\text{left}}$  and $I_{j,\mu}^{\text{right}}$
 the left and   right halves of $I_{j,\mu}=[2^{-j}\mu, 2^{-j}(\mu+1))$. 
Then \[\rho(t)- \int_0^1\rho(s) ds= \sum_{j=0}^\infty \rho_j(t)\] where
\[\rho_j(t)= \sum_{\mu=0}^{2^j-1} 2^j \inn{h_{j,\mu} }{\rho} h_{j,\mu}(t)
= \rho_{j,1}(t)-\rho_{j,2}(t)\]
with
\[
\rho_{j,1}(t)=\sum_{\mu=0}^{2^j-1} 2^j \inn{h_{j,\mu} }{\rho} 
\bbone_{I_{j,\mu}^{\text{left}}}(t),\mand 
\rho_{j,2}(t)=\sum_{\mu=0}^{2^j-1} 2^j \inn{h_{j,\mu} }{\rho} \bbone_{I_{j,\mu}^{\text{right}}}(t).
\]
Now \[ |2^j \inn{h_{j,\mu} }{\rho} |
= 2^j\Big| \int h_{j,\mu}(t) \big[\rho(t)-\rho\big(2^{-j}(\mu+\tfrac 12)\big)\big] dt\Big|  \le 3\cdot 2^{-1/u}\,
2^{-j/u},
\]
by \eqref{Hoelderrho}. Thus, if $\e>0$ we have
\[ \Big(\sum_{\mu=0}^{2^j -1} |2^j \inn{h_{j,\mu} }{\rho} |^{u+\eps}\Big)^{\frac{1}{u+\eps}}\le 
3\cdot 2^{-\frac1u}\, 2^{-\frac ju}\, 2^{\frac j{u+\e}}=: c_{j,\e}.
\]
Since $w$ is increasing 
it is clear that the functions 
$$n\mapsto \bbone_{I_{j,\mu}^{\text{left}}}\big(w(n)\big), \quad
n\mapsto \bbone_{I_{j,\mu}^{\text{right}}}\big(w(n)\big)$$
are characteristic functions of intervals restricted to the integers. For fixed $j$ these intervals are also mutually disjoint,
so we see that
\[
g_{j,\e}:=\frac1{2c_{j,\e}}\,\big(\rho_j\circ w\big)\in r_{u+\e}.
\]
Since $C_\e:=2\sum_{j\geq0} |c_{j,\e}|<\infty$, it then follows that
\[
f=\rho\circ w=\int_0^1\rho+ \sum_{j=0}^\infty2c_{j,\e}\,g_{j,\e}\in R_{u+\eps},\quad  \mbox{with $\|f\|_{R_{u+\eps}}\le 1+C_\eps$}.
\]
\end{proof} 

\begin{remark}\label{R_Rus}
Observe that the previous proof actually shows that, if $m\in V_u$ and $\e>0$, then one can write $m=\sum_{j=0}^\infty c_j m_j$ 
with $m_j\in r_{u+\e}$ and $\sum_{j=1}^\infty|c_j|^\sigma<\infty$, for all $\sigma>0$. 
\end{remark}

\section{\it The proof of Theorem \ref{thm:multipliers}}

We shall actually prove a stronger result than Theorem \ref{thm:multipliers}, 
which provides optimal boundedness (up to endpoints) in a slightly larger region of indices. 
Given a fixed $q>1$, we denote by $\T_q$ the open triangle in the plane $(1/p,s)$ with 
vertices $(1,1), (1/q,1/q)$ and $(1+1/q,1/q)$; see Figure \ref{fig_Tq}. 

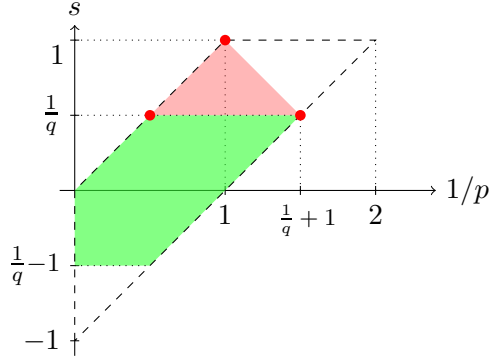
\begin{figure}[h]
 \centering
{\begin{tikzpicture}[scale=2]


\draw[->] (-0.1,0.0) -- (2.4,0.0) node[right] {${1}/{p}$};
\draw[->] (0.0,-0.0) -- (0.0,1.1) node[above] {$s$};
\draw (0.0,-1.1) -- (0.0,-1.0)  ;

\draw (1.0,0.03) -- (1.0,-0.03) node [below] {$1$};
\draw (2.0,0.03) -- (2.0,-0.03) node [below] {$2$};
\draw (1.5,0.03) -- (1.5,-0.03) node [below] {{\footnotesize $\;\;\frac1q+1$}};
\draw (0.03,1.0) -- (-0.03,1.00);
\node [left] at (0,0.9) {$1$};
\draw (0.03,.5) -- (-0.03,.5) node [left] {$\tfrac{1}{q}$};
\draw (0.03,-.5) -- (-0.03,-.5) node [left] {$\tfrac{1}{q}${\small{$-1$}}};
\draw (0.03,-1.0) -- (-0.03,-1.00) node [left] {$-1$};

\draw[dotted] (1.0,0.0) -- (1.0,1.0);
\draw[dotted] (0,1.0) -- (1.0,1.0);
\draw[dotted] (1.5,0.0) -- (1.5,0.5);
\draw[dotted] (2,0.0) -- (2,1.0);

\path[fill=red!70, opacity=0.4] 
(.5,.5)-- (1.5,0.5) -- (1,1);

\path[fill=green!80, dashed, opacity=0.6] (0.0,0.0) -- (.5,.5)-- (1.5,0.5) -- (1,0)--(.5,-.5) -- (0,-0.5)--(0,0);
\draw[dotted] (0,0.5)--(1.5,0.5);
\draw[dotted] (0,-0.5)--(0.5,-0.5);

\draw[dashed] (0.0,-1.0) -- (0.0,0.0) -- (1.0,1.0) -- (2,1.0) -- (1.0,0.0) --
(0.0,-1.0);

\fill[red] (1,1) circle (1pt);
\fill[red] (0.5,0.5) circle (1pt);
\fill[red] (1.5,0.5) circle (1pt);

\end{tikzpicture}
}

\caption{In red, the region $\T_q$. 
}\label{fig_Tq}
\end{figure}

For this region we give  the following result, which includes Theorem \ref{thm:multipliers} as a special case.

\begin{theorem}
\label{th_Tm}
Let $1<q<\infty$ and $(1/p,s)\in \T_q$. Then, for all $u\geq 1$ such that $1/u>s-1/q$, it holds
\Be
\|T_{m} f\|_{F^s_{p,q}\to F^s_{p,q}} \leq\, c\, \|m\|_{V_u},\quad m\in V_u .
\label{Tm}
\Ee
Moreover, a necessary condition for \eqref{Tm} to hold for all such $m$ is that $1/u\geq s-1/q$.
\end{theorem}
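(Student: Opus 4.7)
The plan is to combine the $R_u$ decomposition in Proposition \ref{P_crs} with a vector-valued Haar block-projection estimate, in the spirit of Coifman--Rubio de Francia--Semmes \cite{CoFrSe88}, handling the sufficiency via interpolation and the necessity via scaling of the counterexamples in \cite{su}.

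For sufficiency, given $m\in V_u$ with $1/u>s-1/q$, I first pick $\e>0$ small enough that $1/(u+\e)>s-1/q$ still holds, and invoke Remark \ref{R_Rus} to write $m=\sum_l c_l m_l$ with $m_l\in r_{u+\e}$ and $\sum_l|c_l|^\sigma<\infty$ for every $\sigma>0$. Using the $\sigma$-triangle inequality on $F^s_{p,q}$ with $\sigma=\min\{1,p,q\}$, the problem reduces to a uniform bound
\[
\|T_g f\|_{F^s_{p,q}}\le C\,\|f\|_{F^s_{p,q}}
\]
for every $g=\sum_\nu a_\nu\chi_{I_\nu}\in r_{u+\e}$ with $\|a\|_{\ell^{u+\e}}\le 1$ and disjoint intervals $I_\nu=[a_\nu,b_\nu]\cap\bbN$. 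Setting $P_\nu:=\bbE_{b_\nu+1}-\bbE_{a_\nu}$, the operator splits cleanly as $T_g f=\sum_\nu a_\nu P_\nu f$, and each $P_\nu$ is self-adjoint and uniformly bounded on $F^s_{p,q}$ because $(1/p,s)\in\T_q$ lies in the Schauder range \eqref{cond_range}.

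The heart of the proof is the vector-valued inequality
\[
\bigl\|(P_\nu f)_\nu\bigr\|_{F^s_{p,q}(\ell^{u'})}\le C\,\|f\|_{F^s_{p,q}},
\]
where $1/u'=1-1/u$. Combining this with pointwise Hölder on $\nu$ inside each Littlewood--Paley block yields $\|T_g f\|_{F^s_{p,q}}\le \|a\|_{\ell^u}\,\|(P_\nu f)_\nu\|_{F^s_{p,q}(\ell^{u'})}\le C\,\|f\|_{F^s_{p,q}}$. I would establish the vector-valued bound by interpolation between two endpoints. At an unconditional endpoint $(1/p_0,s_0)$ inside \eqref{eq:uncond-range}, the Haar equivalence $\|f\|_{F^{s_0}_{p_0,q_0}}\asymp \bigl\|(\sum_k 2^{ks_0q_0}|\bbD_k f|^{q_0})^{1/q_0}\bigr\|_{p_0}$ together with the disjointness of the $I_\nu$ gives the estimate essentially for free for any $u'_0\in[1,\infty]$ (since $\bbD_k P_\nu f=\bbD_k f$ if $k\in I_\nu$ and vanishes otherwise, and at most one $\nu$ qualifies per $k$). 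At the maximal endpoint $u'=\infty$, the pointwise domination $\sup_\nu |P_\nu f|\le 2\cM_d f$ by the dyadic maximal function combined with $F^s_{p,q}$-boundedness of $\cM_d$ provides the anchor. The threshold $1/u>s-1/q$ should emerge as the sharp interpolation parameter, measuring the horizontal distance from $(1/p,s)$ to the boundary $s=1/q$ of $\T_q$.

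For the necessity, I would adapt the counterexample of \cite{su}: for each $N$ choose a set $A_N\subset\bbN$ of size $\asymp N$ with alternating structure so that $\|\chi_{A_N}\|_{V_u}\asymp N^{1/u}$, together with a test function $f_N$ built from Haar atoms at the levels in $A_N$ for which $\|T_{\chi_{A_N}}f_N\|_{F^s_{p,q}}/\|f_N\|_{F^s_{p,q}}\gtrsim N^{s-1/q}$; balancing the two scalings forces $1/u\ge s-1/q$. The main obstacle in the argument is the interpolation step, specifically setting up the vector-valued Triebel--Lizorkin framework cleanly in the quasi-Banach portion of $\T_q$ (where $p$ or $q$ may fall to or below $1$, so that $\cM_d$-boundedness and the standard interpolation theory require extra care) and identifying the correct interpolation scheme so that strict inequality $1/u>s-1/q$, rather than a weaker threshold, is recovered.
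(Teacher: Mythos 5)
Your outer skeleton matches the paper's: reduce $m\in V_u$ to the classes $r_{u+\e}$ via Proposition \ref{P_crs}/Remark \ref{R_Rus} and the $\sigma$-triangle inequality, prove a uniform bound for $T_{m[a,\cI]}$, and get necessity from the examples of \cite{su}. The genuine gap is in the core step, the uniform bound for $g=\sum_\nu a_\nu\chi_{I_\nu}$. You route this through the vector-valued estimate $\|(P_\nu f)_\nu\|_{F^s_{p,q}(\ell^{u'})}\lesssim\|f\|_{F^s_{p,q}}$, anchored at $u'=\infty$ by the claim $\sup_\nu|P_\nu f|\le 2\cM f$ (dyadic maximal function) plus ``$F^s_{p,q}$-boundedness of $\cM$''. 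This endpoint fails twice over: $F^s_{p,q}$ with $s>0$ is not a lattice, so a pointwise domination transfers nothing; and the dyadic maximal function is not bounded on $F^s_{p,q}$ in the relevant range (it creates jump discontinuities at dyadic points, and in $\T_q$ one has $s\ge 1/q$, where such jumps are not admissible). What is actually available at the ``$u=1$'' end is only the \emph{scalar} uniform bound $\sup_\nu\|P_\nu f\|_{F^s_{p,q}}\lesssim\|f\|_{F^s_{p,q}}$ from the Schauder-basis results of \cite{gsu,gsu-TL}, not a maximal-function strengthening of it. You also never specify the interpolation scheme that would produce the threshold $1/u>s-1/q$ rather than something weaker; the exponent does not ``emerge'' automatically, it is produced by a specific choice of endpoints.

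The paper avoids the vector-valued formulation entirely by interpolating the \emph{bilinear} map $(a,f)\mapsto\cT[a,f]$ between two endpoints that are both elementary: an $\ell^\infty$ bound at parameters in the unconditionality range \eqref{eq:uncond-range}, and an $\ell^1$ bound (the summation-by-parts estimate \eqref{V1est}, which needs only the scalar uniform boundedness of $\bbE_N$) in the Schauder range. The numerology is then obtained in two stages: first interpolating in $q$ at $p=1$ (between $F^s_{1,1}$ and $F^s_{1,q_1}$ with $q_1$ large) to get the $\ell^u$ bound with $1/u>1-1/q$ near the vertex $(1,1)$, and then, for fixed $q$, along segments joining $(1,1)$ to the bottom edge $s=1/q$ of $\T_q$. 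If you want to salvage your approach, replace your $u'=\infty$ endpoint by this trivial $\ell^1$ endpoint in the bilinear formulation; as written, the key estimate is unproved and the proposed proof of it is incorrect. A secondary point: your necessity sketch covers only $1<p<q$, $1/q<s<1/p$; the portion of $\T_q$ with $p\le 1$ requires the additional interpolation argument of \S\ref{SS_NC}.
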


In view of Proposition \ref{P_crs}, we shall first consider sequences from the class
$r_u$, that is multipliers $m$ of the form 
\Be
m[a,\cI]=\sum_\nu a_\nu \bbone_{I_\nu},
\label{ma}
\Ee
where  
$\cI=\{I_\nu\}$ is a family of disjoint intervals and $a=\{a_\nu\}$ a sequence in $\ell^u$.
For these multipliers we have the following result.

\begin{proposition}
\label{P_Tma}
Let $1<q<\infty$ and $(1/p,s)\in \T_q$. Then, for all $u\geq 1$ such that $1/u>s-1/q$, there exists $c=c(p,q,s,u)>0$ such that 
\Be
\|T_{m[a,\cI]} f\|_{F^s_{p,q}} \leq\, c\, \|a\|_{\ell^u} \|f\|_{F^s_{p,q}}, \quad \forall\;f\in F^s_{p,q}(\SR), \;a\in\ell^u,
\label{Tma}
\Ee
for every multiplier $m[a,\cI]$ defined as in \eqref{ma}. 
Moreover, a necessary condition for \eqref{Tma} to hold for all such $m[a,\cI]$ is that $1/u\geq s-1/q$.
\end{proposition}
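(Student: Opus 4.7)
My plan is as follows. Using the telescoping identity $\sum_{j\in I_\nu}\bbD_j=\bbE_{j^+_\nu+1}-\bbE_{j^-_\nu}=:P_\nu$, I would write
\[
T_{m[a,\cI]}f \;=\; \sum_\nu a_\nu\,P_\nu f,
\]
so that $T_{m[a,\cI]}$ is a weighted sum of block martingale-difference projections attached to the disjoint frequency bands $I_\nu$, which are in particular mutually orthogonal in $L^2$. Since the uniform estimate $\sup_N\|\bbE_N\|_{F^s_{p,q}\to F^s_{p,q}}<\infty$ is already known throughout the Schauder region (which contains $\T_q$) from \cite{gsu,gsu-TL}, each $P_\nu$ is bounded on $F^s_{p,q}$ with constant independent of $\nu$; the brute-force triangle inequality thus yields \eqref{Tma} only with $\|a\|_{\ell^1}$ in place of $\|a\|_{\ell^u}$. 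The crux of the matter is to exploit the disjointness of the $I_\nu$ in order to upgrade this to the sharp $\ell^u$-bound.

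For this I would pass to the dual formulation: since $p,q>1$ throughout $\T_q$ and $(F^s_{p,q})^*=F^{-s}_{p',q'}$, Hölder in the index $\nu$ gives
\[
\|T_{m[a,\cI]}f\|_{F^s_{p,q}} \;\le\; \|a\|_{\ell^u}\,\sup_{\|g\|_{F^{-s}_{p',q'}}=1}\Big(\sum_\nu|\langle P_\nu f,g\rangle|^{u'}\Big)^{1/u'},
\]
so that \eqref{Tma} reduces to the bilinear block estimate
\[
\Big(\sum_\nu|\langle P_\nu f,g\rangle|^{u'}\Big)^{1/u'} \;\le\; C\,\|f\|_{F^s_{p,q}}\,\|g\|_{F^{-s}_{p',q'}},\qquad \tfrac1u>s-\tfrac1q.
\]
I would attempt to establish this by using the self-adjointness and idempotency identity $\langle P_\nu f,g\rangle=\langle P_\nu f,P_\nu g\rangle$, then combining (i) the Burkholder--Rubio de Francia $L^p$ block square-function theorem for martingales (the $u'=2$ endpoint), (ii) the Haar-adapted characterizations of $F^s_{p,q}$ available on the Schauder range from \cite{gsu,gsu-TL}, and (iii) a Sobolev-type embedding trading the $\ell^q$-aggregation over individual Haar frequencies for an $\ell^{u'}$-aggregation over the coarser blocks $I_\nu$ at the sharp cost $s-1/q$.

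The main obstacle is precisely this bilinear block estimate: in the conditional range the Haar square function is equivalent to the $F^s_{p,q}$-norm only modulo delicate correction terms, so the $L^2$-orthogonality of the $P_\nu$ must be transferred to $F^s_{p,q}$ with the correct Sobolev loss. Finally, to establish the necessary condition $\tfrac1u\ge s-\tfrac1q$, I would adapt the lacunary Haar constructions of \cite{su,sudet}: take $a\equiv 1$ on $N$ singleton intervals $I_\nu=\{j_\nu\}$ at widely spaced Haar levels, test $T_{m[a,\cI]}$ on a sum $f=\sum_{\nu=1}^N c_\nu h_{j_\nu,\mu_\nu}$ normalized in $F^s_{p,q}$, and extract a lower bound $\|T_{m[a,\cI]}f\|_{F^s_{p,q}}/\|a\|_{\ell^u}\gtrsim N^{1/u-(s-1/q)}$, which would contradict \eqref{Tma} as $N\to\infty$ whenever $\tfrac1u<s-\tfrac1q$.
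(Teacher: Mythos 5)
Your telescoping identity $T_{m[a,\cI]}f=\sum_\nu a_\nu P_\nu f$ and the $\ell^1$ bound from $\sup_N\|\bbE_N\|_{F^s_{p,q}\to F^s_{p,q}}<\infty$ are fine, and your necessity sketch is essentially the paper's (it invokes the $N$-separated examples $m=\bbone_E$ from \cite[\S5]{su}, giving $\|T_m\|_{F^s_{p,q}\to F^s_{p,q}}\gtrsim 2^{N(s-1/q)}$ against $\|a\|_{\ell^u}=2^{N/u}$; for the portion of $\T_q$ with $p\le 1$ an additional interpolation step is needed, which you omit). The genuine gap is in the sufficiency direction: after dualizing, your whole argument rests on the bilinear block estimate
\[
\Big(\sum_\nu|\langle P_\nu f,g\rangle|^{u'}\Big)^{1/u'}\lesssim \|f\|_{F^s_{p,q}}\,\|g\|_{F^{-s}_{p',q'}},
\]
and you do not prove it --- you yourself flag it as ``the main obstacle.'' The tools you list do not assemble into a proof in the relevant range: the Haar-adapted square-function characterization of $F^s_{p,q}$ is equivalent to the norm only in the unconditionality region \eqref{eq:uncond-range}, whereas $\T_q$ lies entirely outside it (that is the whole point of the proposition); the martingale Rubio de Francia inequality gives only the $u'=2$, $L^p$ case; and the ``Sobolev-type embedding trading $\ell^q$ over frequencies for $\ell^{u'}$ over blocks at cost $s-1/q$'' is precisely the quantitative content of the proposition restated, not derived. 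As written, the core of the argument is circular.

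The paper's proof takes a much softer route that avoids any block square-function estimate: it treats $(a,f)\mapsto \cT[a,f]$ as a bilinear map and applies complex interpolation twice. First, at $p=1$, interpolating the $\ell^\infty$ bound on $F^{s}_{1,1}=B^s_{1,1}$ (unconditionality) against the $\ell^1$ bound on $F^{s}_{1,q_1}$ for $q_1$ large (uniform boundedness of $\bbE_N$ plus summation by parts) yields $\|\cT[a,f]\|_{F^s_{1,q}}\lesssim\|a\|_{\ell^{u}}\|f\|_{F^s_{1,q}}$ whenever $1/u>1-1/q$. Second, for $(1/p,s)\in\T_q$ one interpolates between $F^{s_0}_{1,q}$ with $s_0=1-\e_0$ (using the first step) and $F^{s_1}_{p_1,q}$ with $s_1=1/q-\e_1$ lying in the unconditional region (where $\ell^\infty$ suffices); this produces $1/u=s-1/q+\e_1$, and letting $\e_1\searrow 0$ covers the full range $1/u>s-1/q$. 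To salvage your approach you would have to prove the bilinear block estimate from scratch, which appears to be at least as hard as the proposition itself.
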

\begin{remark}
We emphasize that the constant $c$ in \eqref{Tma} 
does not depend on the family of disjoint intervals $\cI=\{I_\nu\}$.
\end{remark}

In the next subsections we shall prove Proposition \ref{P_Tma}. For the sufficiency part we shall use complex interpolation
applied to the bilinear operator 
\[
(a,f)\longmapsto \cT[a,f]:= T_{m[a,\cI]} f.
\] 
For simplicity we shall remove the dependence on $\cI$ in the subsequent notation, as it will be clear
from the proofs that the involved constants do not depend on it.

\subsection{\it Interpolation with varying $q$ and $p=1$}\label{qvary}
We first prove an  inequality for $p=1$ which is  efficient for  $s$  near $1$, namely
\Be \label{u-one-claim}      \|\cT[a,f]\|_{F^s_{1,q}} \lc \|a\|_{\ell^u}  \|f\|_{F^s_{1,q}}, \quad s<1, \quad
1/u>1-1/q.
\Ee 
Since the Haar system is an unconditional basis on $F^s_{1,1}=B^s_{1,1}$, $0<s<1$ (see  Theorem 2.9 in \cite{triebel-bases}) we have 
\[ \|\cT[a,f]\|_{F^s_{1,1}} \lc \|a\|_{\ell^\infty}  \|f\|_{F^s_{1,1}}, \quad 0<s<1.
\]
Next, the uniform boundedness of the operators $\SE_N$ in $F^s_{1,q_1}$ (see \cite[Corollary 1.3]{gsu}) and the trivial estimate in 
\eqref{V1est} (with $\sigma=1$) imply that for any $q_1\in (q,\infty)$  
\[\|\cT[a,f]\|_{F^s_{1,q_1}} \lc \|a\|_{1}  \|f\|_{F^s_{1,q_1}}, \quad 0<s<1.
\]
By complex interpolation we then obtain \eqref{u-one-claim} for $1/u= (1-1/q)/ (1-1/q_1)$, 
which after choosing $q_1$ large enough implies \eqref{u-one-claim} whenever $1/u>1-1/q$.

\subsection{\it Interpolation with  fixed $q$}
Let $q\in(1,\infty)$ be fixed, and let $(1/p,s)\in \T_q$.
We shall prove \eqref{Tma} by interpolating sufficiently close to the upper vertex $(1, 1)$ and the lower segment $(1/p_1, 1/q)$ of $\T_q$.

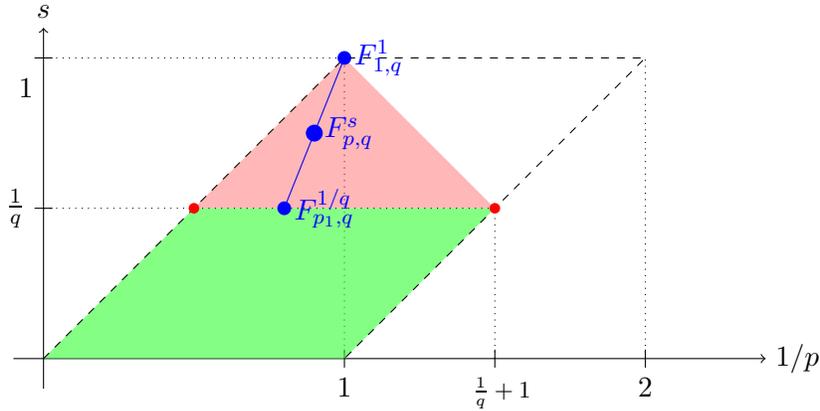
\begin{figure}[h]
 \centering
{\begin{tikzpicture}[scale=4]


\draw[->] (-0.1,0.0) -- (2.4,0.0) node[right] {${1}/{p}$};
\draw[->] (0.0,-0.1) -- (0.0,1.1) node[above] {$s$};

\draw (1.0,0.03) -- (1.0,-0.03) node [below] {$1$};
\draw (2.0,0.03) -- (2.0,-0.03) node [below] {$2$};
\draw (1.5,0.03) -- (1.5,-0.03) node [below] {{\footnotesize $\;\;\frac1q+1$}};
\draw (0.03,1.0) -- (-0.03,1.00);
\node [left] at (0,0.9) {$1$};
\draw (0.03,.5) -- (-0.03,.5) node [left] {$\tfrac{1}{q}$};

\draw[dotted] (1.0,0.0) -- (1.0,1.0);
\draw[dotted] (0,1.0) -- (1.0,1.0);
\draw[dotted] (1.5,0.0) -- (1.5,0.5);
\draw[dotted] (2,0.0) -- (2,1.0);

\path[fill=red!70, opacity=0.4] 
(.5,.5)-- (1.5,0.5) -- (1,1);

\path[fill=green!80, dashed, opacity=0.6] (0.0,0.0) -- (.5,.5)-- (1.5,0.5) -- (1,0)--(0,0);
\draw[dotted] (0,0.5)--(1.5,0.5);

\draw[dashed] (0.0,0.0) -- (1.0,1.0) -- (2,1.0) -- (1.0,0.0);

\fill[blue] (1,1) circle (0.65pt) node [right] {$F^1_{1,q}$};
\fill[red] (0.5,0.5) circle (0.5pt);
\fill[red] (1.5,0.5) circle (0.5pt);

\fill[blue] (0.8,0.5) circle (0.65pt) node [right] {$F^{1/q}_{p_1,q}$};
\draw[blue] (0.8,0.5)--(1,1);
\fill[blue] (0.9,0.75) circle (0.8pt) node [right] {$F^s_{p,q}$};

\end{tikzpicture}
}

\caption{Interpolation strategy for points $(1/p,s)\in\T_q$.
}\label{fig_interpol1}
\end{figure}


Let $P=(1/p,s)\in\T_q$ and $\e_1>\e_0>0$ be sufficiently small, to be chosen. Let $P_0=(1, s_0)$ with $s_0=1-\e_0$. 
Draw a line through $P_0$ and $P$, and let $P_1=(1/p_1, s_1)$ be the intersection with the horizontal line $s_1=1/q-\e_1$.
That is,
\[
P=(1-\theta)\cdot P_0+\theta\cdot P_1, \quad \mbox{with} \quad \theta=\frac{s_0-s}{s_0-s_1}.
\]
Choosing $\e_0,\e_1$ sufficiently small we can guarantee that $s_1<s<s_0$ (and hence $\theta\in(0,1)$), 
and that $P_1$ lies in the green region.
Next, take
\[
\frac1{u_0}:=1-\frac1q\,+\e_1-\e_0 >1-\frac1q.
\] 
From the previous step and the unconditional basis property we have
\[
\cT:\ell^{u_0}\times F^{s_0}_{1,q}\to F^{s_0}_{1,q}\mand 
\cT:\ell^{\infty}\times F^{s_1}_{p_1,q}\to F^{s_1}_{p_1,q}.
\]
Using complex interpolation this yields
\Be
\|\cT[a,f]\|_{F^{s}_{p,q}} \lc \|a\|_{\ell^{u}}  \|f\|_{F^{s}_{1,q}},
\label{Tuaf}
\Ee
with 
\[
\frac 1u=\frac{1-\theta}{u_0}+\frac\theta\infty=\frac{s-s_1}{s_0-s_1}\cdot\frac1{u_0}=s-\frac1q+\e_1.
\]
Letting $\e_1\searrow0$ we deduce the validity of \eqref{Tuaf} whenever $\frac1u>s-\frac1q$.
This completes the proof of the sufficient condition in Proposition \ref{P_Tma}.

\subsection{\it Necessary condition}\label{SS_NC}

Suppose first that $1<p<q$ with $1/q<s<1/p$. Then, the example constructed in \cite[\S5]{su} gives a multiplier of the form $m=\bbone_E$, so that $\card{\,E}=2^N$ (with the elements in $E$ being $N$-separated), and with the property that
\Be
\|T_m\|_{F^s_{p,q}\to F^s_{p,q}}\gtrsim 2^{N(s-\frac1q)}.
\label{Tmspq}
\Ee
Since we can write $m$ in the form \eqref{ma} (with $I_\nu=\{\nu\}$ and $a_\nu=1$, for $\nu\in E$), then 
the validity of \eqref{Tma} will imply that
\[
2^{N(s-\frac1q)} \lesssim \|T_{m}\|_{F^s_{p,q}\to F^s_{p,q}}\lesssim \|a\|_{\ell^u}=2^{N/u}.
\]
Thus, we must necessarily have $1/u\geq s-1/q$.

Arguing by interpolation as in \cite[\S7]{su} one can show that \eqref{Tmspq} 
(with an $\e$ loss) continues to hold for all $(1/p,s)$ with 
\Be\max\{1/q, 1/p-1\}<s<\min\{1/p,1\}
\label{Quad}
\Ee
which is a larger region than $\T_q$; see Figure \ref{fig_Tq}.

To be more precise, let $P_1=(1/p,s)$ belong to the open quadrilateral defined by \eqref{Quad}, where we assume $p\leq 1$. We shall interpolate close to the points shown in Figure \ref{fig3}.

\begin{figure}[h]
 \centering
{\begin{tikzpicture}[scale=3]


\draw[->] (-0.1,0.0) -- (2.4,0.0) node[right] {${1}/{p}$};
\draw[->] (0.0,-0.1) -- (0.0,1.1) node[above] {$s$};

\draw (1.0,0.03) -- (1.0,-0.03) node [below] {$1$};
\draw (2.0,0.03) -- (2.0,-0.03) node [below] {$2$};
\draw (1.5,0.03) -- (1.5,-0.03) node [below] {{\footnotesize $\;\;\frac1q+1$}};
\draw (0.03,1.0) -- (-0.03,1.00);
\node [left] at (0,0.9) {$1$};
\draw (0.03,.5) -- (-0.03,.5) node [left] {$\tfrac{1}{q}$};

\draw[dotted] (1.0,0.0) -- (1.0,1.0);
\draw[dotted] (0,1.0) -- (1.0,1.0);
\draw[dotted] (1.5,0.0) -- (1.5,0.5);
\draw[dotted] (2,0.0) -- (2,1.0);

\path[fill=red!70, opacity=0.4] 
(.5,.5)-- (1.5,0.5) -- (2,1)--(1,1);

\path[fill=green!80, dashed, opacity=0.6] (0.0,0.0) -- (.5,.5)-- (1.5,0.5) -- (1,0)--(0,0);
\draw[dotted] (0,0.5)--(1.5,0.5);

\draw[dashed] (0.0,0.0) -- (1.0,1.0) -- (2,1.0) -- (1.0,0.0);

\fill[blue] (1.5,0.85) circle (0.65pt) node [right] {$F^s_{p,q}$};

\fill[blue] (0.5,0.5) circle (0.65pt) node [left] {$F^{1/q}_{q,q}$};
\draw[blue] (1.5,0.85)--(.5,.5);
\fill[blue] (1,0.675) circle (0.8pt) node [above] {$F^{s_\theta}_{1,q}$};

\end{tikzpicture}
}
\caption{}
\label{fig3}
\end{figure}
Namely, given $\e>0$, let $P_0=(\frac1q,\frac1q-\e)$. Draw a segment from $P_0$ to $P_1$, and consider the convex combination of $P_0$ and $P_1$ with first coordinate $(1+\eps)^{-1}$; i.e 
let $\theta\in(0,1)$ and $s_\theta$ be such that
\Be
\big(\tfrac 1{1+\eps}, s_\theta\big)=(1-\theta)\,\big(\tfrac 1q, \tfrac 1q-\eps\big) \,+\,\theta\,\big(\tfrac 1p,s\big) .
\label{Ptheta}
\Ee
Then, by complex interpolation we have
\[
\|T_m\|_{F^{s_\theta}_{1+\e, q}}\,\lesssim\, \|T_m\|^{1-\theta}_{F^{\frac1q-\e}_{q,q}}\,\|T_m\|^\theta_{F^s_{p,q}}
\]
By unconditionality, $\|T_m\|_{F^{\frac1q-\e}_{q,q}}\lesssim 1$, so we arrive at
\[
\|T_m\|_{F^s_{p,q}} \,\gtrsim\,
\|T_m\|_{F^{s_\theta}_{1+\e, q}}^{1/\theta} \,\gtrsim \,2^{\frac N\theta\,(s_\theta-\frac1{q})},
\]
the last bound due to \eqref{Tmspq}.
Now, solving for $s_\theta$ in \eqref{Ptheta} we see that
\[
s_\theta-\frac1q=\big(s-\frac1q\big)\theta -(1-\theta)\e.
\]
Thus, 
\[
\|T_m\|_{F^s_{p,q}} \,\gtrsim\,
\,2^{ N\,[(s-\frac 1q)-\frac{1-\theta}\theta\,\e]}.
\]
So, if \eqref{Tma} was true, arguing as above we would arrive at
\[
\frac1u\geq (s-\frac 1q)-\frac{1-\theta}\theta\,\e,
\]
which letting $\e\searrow 0$ leads to $\frac1u\geq s-\frac 1q$.

\subsection{\it Conclusion of the proof of Theorem \ref{th_Tm}}
Let $q>1$ and let 
$(1/p,s)\in\T_q$ be fixed. Let $1/u>s-1/q$ and $m\in V_u$ with $u\geq1$. Then, for some $u_1>u$ we also have $1/u_1>s-1/q$. By  Remark \ref{R_Rus} we can write 
$m=\sum_{j=0}^\infty c_j m_j$ with $m_j\in r_{u_1}$ and $\sum_{j=0}^\infty |c_j|^\sigma\lesssim \|m\|^\sigma_{V_u}$, with $\sigma=\min\{1,p\}$. Then, using the $\sigma$-triangle inequality, we have
\[
\|T_mf\|_{F^s_{p,q}}^\sigma\leq \sum_{j=0}^\infty |c_j|^\sigma\,\|T_{m_j}f\|_{F^s_{p,q}}^\sigma, \quad f\in F^s_{p,q}.
\]
By Proposition \ref{P_Tma}, $\|T_{m_j}f\|_{F^s_{p,q}}\lesssim \|f\|_{F^s_{p,q}}$, for all $j\geq 0$, so we conclude that 
\[
\|T_m\|_{F^s_{p,q}\to F^s_{p,q}} \lesssim \|m\|_{V_u}.
\]
\qed
\remark When $p>1$, the assertion $$ \|T_m\|_{F^{-s}_{p',q'}\to F^{-s}_{p',q'}} \lesssim \|m\|_{V_u}$$ stated in Theorem \ref{thm:multipliers} follows from \eqref{Tm} by duality.
So, when $q>1$, the condition on $V_u$ is optimal (up to endpoints) also in the lower triangle
on the left of Figure \ref{fig1}. 

\remark When $1/2<p\leq1$, we did not state any result for the right upper triangle in Figure \ref{fig_Tq}. It is also possible to obtain, by complex interpolation, a sufficient condition for multipliers of the form $m[a,\cI]$
in terms of $\|a\|_{\ell^u}$, although in this range the value of $u$ will no longer match the necessary condition from \S\ref{SS_NC}.

\remark When $1/2<q\leq1$, one can also prove by interpolation, for multipliers of the form $m[a,\cI]$, that \[
\|a\|_{\ell^u}<\infty, \quad \frac1u>\frac1q-1-s,\]
is a sufficient condition in the open triangle with vertices $(0,-1)$, $(1/q,1/q-1)$ and $(1/q-1,1/q-1)$; see Figure \ref{fig4} below. This matches the necessary condition from the examples in \cite[\S5]{su} (except for the endpoint). In the remaining part of the figure, however, the sufficient condition obtained by interpolation will be weaker than this one.

\begin{figure}[h]
 \centering
 {
\begin{tikzpicture}[scale=2]


\draw[->] (-0.1,0.0) -- (2.3,0.0) node[right] {${1}/{p}$};
\draw[->] (0.0,-0.0) -- (0.0,1.1) node[above] {$s$};
\draw (0.0,-1.1) -- (0.0,-1.0)  ;

\draw (1.0,0.03) -- (1.0,-0.03) node [below] {$1$};
\draw (2,0.03) -- (2,-0.03) node [below] {$2$};
\draw (0.03,1.0) -- (-0.03,1.00);
\node [left] at (0,0.9) {$1$};
\draw (0.03,.5) -- (-0.03,.5) node [left] {$\tfrac{1}{q}$ {\footnotesize $-1$}};
\draw (0.03,-1.0) -- (-0.03,-1.00) node [left] {$-1$};

\draw[dotted] (1.0,0.0) -- (1.0,1.0);
\draw[dotted] (0,1.0) -- (1.0,1.0);
\draw[dotted] (2,0.0) -- (2,1.0);

\path[fill=green!70, opacity=0.4] (.5,.5)-- (1,1)--(2,1) -- (1.5,0.5)--(0.5,0.5);
\draw[dotted] (0,0.5)--(1.5,0.5);

\draw[dashed] (0.0,-1.0) -- (0.0,0.0) -- (1.0,1.0) -- (2,1.0) -- (1.0,0.0) --
(0.0,-1.0);

\fill[red] (0.5,0.5) circle (1pt) ;
\fill[red] (1.5,0.5) circle (1pt) ;
\fill[red] (0,-1) circle (1pt) ;
\path[fill=red!70, opacity=0.4] (.5,.5)--(1.5,0.5)--(0,-1);

\end{tikzpicture}
}
\caption{Parameter domain for the cases $1/2<q<1$.}\label{fig4}
\end{figure}
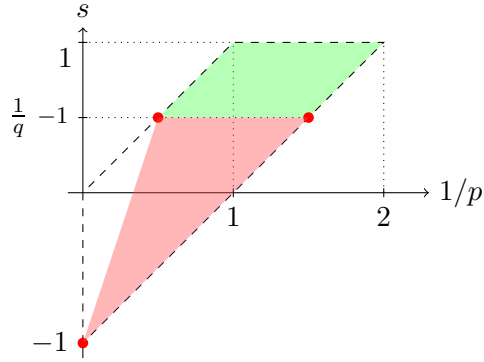

\remark It may be interesting to note that even for the special case of Sobolev spaces
$H^s_p = F^s_{p,2}$, $1<p<2$, $1/2\le s<1/p$ and $u<\frac{2}{2s-1}$  the use of 
 Triebel-Lizorkin spaces $F^s_{p,q}$ with $q\neq 2$ is crucial. 
Such interpolation arguments were used for   multiplier transformations   in other contexts  to establish endpoint results on
  Lorentz spaces $L_{p,2}$, see  \cite{SeeMonatshefte, See89} for basic versions, 
	and \cite{SeeTao01, TaoWr01} for  more advanced versions. 


\newpage
\end{document}